 \newtheorem{theo}{Theorem}[section]
 \newtheorem{cor}[theo]{Corollary}
 \newtheorem{lem}[theo]{Lemma}
 \newtheorem{prop}[theo]{Proposition}
 \theoremstyle{definition}
 \theoremstyle{remark}
\newtheorem{exam}[theo]{Example}
\newtheorem*{question*}{Question}
 \numberwithin{equation}{section}
\newcommand{\qf}[1]{\mbox{$\langle #1\rangle $}}
\newcommand{\pff}[1]{\mbox{$\langle\!\langle #1\rangle\!\rangle $}}
\newcommand{\qpf}[1]{\mbox{$\langle\!\langle #1]] $}}
\newcommand{\HH}{{\mathbb H}}
\newcommand{\ZZ}{{\mathbb Z}}
\newcommand{\FF}{{\mathbb F}}
\newcommand{\MM}{{\mathbb M}}
\newcommand{\ovl}[1]{\overline{#1}}
\newcommand{\bbperp}{\operatornamewithlimits{\mbox{\Large\boldmath $\perp$}}}
\renewcommand{\phi}{\varphi}
\DeclareMathOperator{\spn}{span}
\begin{document}

\title[Splitting of quaternions and octonions]{Splitting of
quaternions and octonions over purely inseparable extensions
in characteristic 2}

\author{Detlev W.~Hoffmann}
\thanks{The author is supported in part by DFG grant HO 4784/2-1
{\em Quadratic forms, quadrics, sums
of squares and Kato’s cohomology in positive characteristic}.}

\address{Fakult\"at f\"ur Mathematik,
Technische Universit\"at Dortmund,
D-44221 Dortmund,
Germany}
\email{detlev.hoffmann@math.tu-dortmund.de}

\date{}

\begin{abstract} We give examples of quaternion and octonion division algebras
over a field $F$ of characteristic $2$ that split over a purely inseparable
extension $E$ of $F$ of degree $\geq 4$ but that do not split over any
subextension of $F$ inside $E$ of lower exponent, or, in the case of
octonions, over any simple subextension of $F$ inside $E$.  Thus,
we get a negative answer
to a question posed by Bernhard M\"uhlherr and Richard Weiss.
We study this question in terms of
the isotropy behaviour of the associated norm forms.
\end{abstract}

\subjclass[2010]{Primary: 11E04; Secondary 11E81 12F15 16H05 16K20 17A35 17A75}
\keywords{quadratic form; Pfister form; isotropy; quaternion algebra;
octonion algebra; norm form; purely inseparable extension}

\maketitle

\section{Introduction}
Recall that a quaternion (resp. octonion) algebra $A$ over a field $F$
is a an associative
$4$-dimensional (resp. nonassociative $8$-dimensional) composition algebra
whose associated norm form $n_A$ is a nondegenerate quadratic form
defined on the $F$-vector space $A$.  This norm form has the property
that $A$ is a division
algebra iff $n_A$ is anisotropic.  Furthermore, $n_A$ determines $A$ in the sense
that if $A'$ is another quaternion (resp. octonion) algebra, then
$A\cong A'$ as $F$-algebras iff $n_A\cong n_{A'}$ (i.e., the norm forms
are isometric).  There is a vast amount of literature on such algebras.
In this note, we consider only base fields of characteristic $2$, a case
that, compared to characteristic not $2$,
gives rise to additional subtleties that make the theory in some sense
richer but also more intricate and complicated.   For example, 
when conisdering subfields of such algebras one has to distingush between
separable and inseparable quadratic extensions.  The starting point for this
investigation was a question in this context posed to me by Bernhard M\"uhlherr
und Richard Weiss.  They asked 
the following:  Given an octonion division algebra $O$ over a field $F$
of characteristic $2$ that splits over some purely inseparable algebraic extension
$E/F$, is it true that then $E$ contains a subfield $K$ that is a quadratic
extension of $F$ and that embeds into $O$?  We will show that the answer
is negative in general by constructing various counterexamples.

Our approach will be to consider
this question in the context of quadratic forms using the fact that
the division property of such algebras can be expressed in terms of the
anisotropy of the associated norm forms that are given by $2$-fold
Pfister forms in the case of quaternion algebras and by $3$-fold Pfister forms
in the case of octonion algebras.  This then allows to broaden the context
to study the isotropy behaviour of arbitrary $n$-fold Pfister forms
over purely inseparable extension in characteristic $2$.

In the next section, we will collect those facts about the theory of
bilinear and quadratic forms in characteristic $2$ that we will need
in our constructions.  In \S\,3, we will study the isotropy
behaviour of bilinear Pfister forms under purely inseparable extension
and construct various examples that we will built upon in \S\,4, where
we investigate the isotropy behaviour of quadratic Pfister forms
under purely inseparable extensions.  In that section, we will
construct our main examples.  Namely, given any integers $m\geq 2$,
$n\geq 3$ (resp. $n\geq 2$)
and $\ell$ with $1\leq \ell\leq \max\{ 1,m-2\}$ (resp.
$2\leq \ell\leq m$), we will construct a field $F$ with
an anisotropic $n$-fold quadratic Pfister form $\pi$ and a purely
inseparable extension $M/F$ such that $M$ has exponent $\ell$ over 
$F$ and $[M:F]=2^m$ and such that $\pi$ will become isotropic 
over $M$ but $\pi$ will stay anisotropic over any simple (resp.
exponent $\leq \ell-1$) extension of $F$ contained in $M$.
In \S\,5, we will rephrase these constructions in the original
context of quaternion and octonion algebras.

\section{Quadratic and bilinear forms in characteristic $2$}
From now on, all fields that we will consider will be of characteristic $2$,
all bilinear forms will be symmetric, nondegenerate and finite-dimensional,
and all quadratic forms will be finite-dimensional (but not necessarily
nondegenerate).  We refer to \cite{ekm} for any terminology or any facts that we
state without further reference.  As usual,
we use the symbols $\cong$, $\perp$ to denote
isometry and orthogonal sum,  and $\otimes$ for the tensor product of two bilinear
forms or of a bilinear with a quadratic form.
Let $q$ (resp. $b$) be a quadratic (resp. bilinear form) on an $F$-vector space
$V$.  The value set is defined to be $D_F(q)=\{ q(x)\,|\,x\in V\}\cap F^*$
(resp. $D_F(b)=\{ b(x,x))\,|\,x\in V\}\cap F^*$), and $q$ (resp. $b$) is said
to be isotropic if there exists some $x\in V\setminus\{ 0\}$ with
$q(x)=0$ (resp. $b(x,x)=0$).  If $(q',V')$ is another quadratic form,
then we say that $q$ represents $q'$ if there is a injective $F$-linear map
$\sigma:V'\to V$ such that $q(\sigma(x))=q'(x)$ for all $x\in V'$, and we write
$q'\prec q$.  Similarly, one defines $b'\prec b$ for another bilinear
form $(b',V')$.

Every quadratic form $q$ decomposes as 
$$q\cong [a_1,b_1]\perp\ldots\perp [a_r,b_r]
\perp\qf{c_1,\ldots,c_s},\ a_i,b_i,c_i\in F,$$
where $[a,b]$ stands for the quadratic form $ax^2+xy+by^2$ and 
$\qf{c_1,\ldots,c_s}$ corresponds to the restriction of $q$
to its radical and stands for the form $c_1x_1^2+\ldots +c_sx_s^2$.
In particular, the values $s,r$ in such a decomposition are uniquely determined
and $q$ is nondegenerate (i.e., $q$ has trivial radical) iff $s=0$.

If $r=0$, we say that the above $q$ is totally singular.  In this case,
$D_F(q)\cup\{ 0\}=\sum_{i=1}^sF^2c_i$ is a finitely generated subvector space
of $F$ (considered as vector space over $F^2$).  In fact, if 
$q\cong\qf{c_1,\ldots,c_s}$ and $q'\cong\qf{d_1,\ldots,d_t}$ are
both totally singular, then $q\cong q'$ iff $s=t$ and
$D_F(q)\cup\{ 0\}=D_F(q')\cup\{ 0\}$ iff $s=t$ and 
$\spn_{F^2}(c_1,\ldots,c_s)=\spn_{F^2}(d_1,\ldots,d_r)$.
We refer the reader to \cite{hl1} concerning further properties of
totally singular quadratic forms.

$\HH=[0,0]$ is called a quadratic hyperbolic plane, and $q$ is said to be
hyperbolic iff $q$ is isometric to an orthogonal sum of hyperbolic
planes.  If $q'$ is nondegenerate, then $q'\prec q$ iff there exists 
a quadratic form $q''$ with $q\cong q'\perp q''$.  
Also, it is not difficult to show that if $q$ is a nondegenerate quadratic
form (or a quadratic form with anisotropic radical), then
$q$ is isotropic iff $\HH\prec q$.

A $2$-dimensional isotropic bilinear form is called a metabolic plane.
For a suitable basis, the Gram matrix will be of shape
$\left(\begin{smallmatrix} 0 & 1 \\ 1 & a\end{smallmatrix}\right)$
for some $a\in F$, and
this metabolic form will be denoted by $\MM_a$, and $\MM_0=\HH_b$
will be called a bilinear hyperbolic plane.  A bilinear form $b$ is
called metabolic (hyperbolic) iff it is isomteric to an orthogonal sum of
metabolic (bilinear hyperbolic) planes.  A bilinear form is diagonalizable
iff it is not hyperbolic, in which case we use the notation
$\qf{a_1,\ldots,a_n}_b$ when $\{ e_1,\ldots,e_n\}$ is an
orthogonal basis of $(b,V)$ with $b(e_i,e_j)=\delta_{ij}a_i\in F$.

An $n$-fold bilinear Pfister form  ($n\geq 1$)
is a bilinear form that is isometric to
$\qf{1,a_1}_b\otimes\ldots\otimes\qf{1,a_n}_b$ for suitable $a_i\in F^*$,
in which case we write $\pff{a_1,\ldots,a_n}_b$ for short.
$\qf{1}_b$ is considered to be the unique $0$-fold bilinear Pfister form.
The set of forms isometric to $n$-fold bilinear Pfister forms over $F$ will be
denoted by $BP_nF$.

An $n$-fold quadratic Pfister form  ($n\geq 1$)
is a (nondegenerate) quadratic form that is isometric to
$\beta\otimes [1,a]$ for some $a\in F$ and some $\beta\in BP_{n-1}F$.
If $\beta\cong\pff{a_1,\ldots,a_{n-1}}_b$, this quadratic Pfister form
will be written
$\qpf{a_1,\ldots,a_{n-1},a}$ for short, and $P_nF$ will denote the set
of forms isometric   to $n$-fold quadratic Pfister forms over $F$.

We also consider totally singular quadratic forms that are
so-called quasi-Pfister forms of type
$\pff{a_1,\ldots,a_n}\cong\qf{1,a_1}\otimes\ldots\otimes\qf{1,a_n}$, $a_i\in F$
(here, we allow $a_i=0$).  The set of quadratic forms isometric to
$n$-fold quasi-Pfister forms will be denoted by $QP_nF$.  Note
that $D_F(q)\cup\{ 0\}=F^2(a_1,\ldots,a_n)$ is just the field extension
of $F^2$ inside $F$ generated by the $a_i$, and that two forms in
$QP_nF$ are isometric iff these associated fields are the same.

If $b\in BP_nF$, then the quadratic form $q_b$ given by $q_b(x)=b(x,x)$
is a form in $QP_nF$.  It should be noted that in this way, nonisometric forms
in $BP_nF$ may yet give rise to isometric forms in $QP_nF$.

Pfister forms play a central role in the theory of bilinear/quadratic forms
and they are characterized by many nice properties.  For example,
a bilinear (resp. quadratic) $n$-fold Pfister form ($n\geq 1$) is
metabolic (resp. hyperbolic) iff it is isotropic.
In particular, if $\phi$ is a bilinear (quadratic) $n$-fold Pfister form
for some $n\geq 1$, and if $\psi$ is any other bilinear (quadratic)
form with $\psi\prec a\phi$ for some $a\in F^*$ and if in addition
$\dim\psi >\frac{1}{2}\dim\phi$ (in which case $\psi$ is called
a Pfister neighbor of $\phi$), then $\psi$ is isotropic iff $\phi$
is isotropic iff $\phi$ is metabolic (hyperbolic). 
Also a quadratic (resp. bilinear, resp. quasi) Pfister form $\pi$
has the property that $\pi\cong a\pi$ for some $a\in F^*$ iff
$a\in D_F(\pi)$, a property that is often referred to as
roundness.

We are interested in the isotropy behaviour under purely inseparable
algebraic extensions  (p.i.\ extensions for short).  We will recall
a few facts regarding these extensions in the next section.

\section{Bilinear forms over purely inseparable field extensions}
Recall that if $F$ is a field of characteristic $2$, then elements
$a_1,\ldots,a_n\in F$ are called $2$-independent if 
$[F^2(a_1,\ldots,a_n):F^2]=2^n$, and a subset $S\subset F$ is
$2$-independent if any finite subset of $S$ is $2$-independent.
A $2$-basis of $F$ is a $2$-independent subset $S$ with $F^2(S)=F$
(such a $2$-basis always exists).

Let $E/F$ be a p.i.\ extension. This means that to each $a\in E$ there
exists some integer $n\geq 0$ such that $a^{2^n}\in F$.  The smallest such
$n$ will be called the exponent of $a$, $\exp_2(a)=n$, and we define 
the exponent of $E$ to be $\exp_2(E/F)=\sup\{\exp_2(a)\,|\,a\in E\}$.
A p.i.\ extension $E/F$ will be of exponent $1$ iff there exists
a nonempty $2$-independent subset $S\subset F$ such that
$E=F(\sqrt{s}\,|\,s\in S)$.  A finite p.i.\ extension $E/F$ is
called modular iff it is isomorphic to a tensor product of simple
extensions over $F$ iff there exists a $2$-independent subset
$\{ s_1,\ldots, s_r\}$ ($r\geq 0$) and integers $n_1,\ldots,n_r\geq 1$
such that
\begin{equation}\label{eq1}
E=F\bigl(\sqrt[2^{n_1}]{s_1},\ldots,\sqrt[2^{n_r}]{s_r}\bigr).
\end{equation}
It is an easy exercise to show that if $[E:F]=2^m$ with $m\leq 2$
then $E/F$ is modular.  But there are already examples of nonmodular
extensions with $[E:F]=8$ (we will encounter such an extension
in Example \ref{ex2}).

\begin{prop}\label{bil-pf}
Let $\alpha\cong\pff{a_1,\ldots, a_n}_b\in BP_nF$ and put
$A=\{a_1,\ldots, a_n\}\subset F^*$.  Let $\beta$ be a
bilinear form over $F$.  Let $E/F$ be a modular finite p.i.\ extension
as in Eq. (\ref{eq1}) for some $2$-independent subset 
$S=\{ s_1,\ldots, s_r\}\neq\emptyset$.  Let $L=F(\sqrt{s_1},\ldots,\sqrt{s_r})$
and $\sigma\cong\pff{s_1,\ldots, s_r}_b$.
\begin{enumerate}
\item[(i)] $\alpha$ is anisotropic iff $A$ is $2$-independent.
\item[(ii)] The following statements are equivalent:
\begin{enumerate}
\item[(a)] $\beta_E$ is isotropic;
\item[(b)] $\beta_L$ is isotropic;
\item[(c)] $\sigma\otimes\beta$ is isotropic.
\end{enumerate}
\item[(iii)] $\alpha_E$ is anisotropic iff $A\cup S$ is $2$-independent. 
\end{enumerate}
\end{prop}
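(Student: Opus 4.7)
For part (i), the plan is to reduce to the associated quasi-Pfister form. Writing $q_\alpha(x)=\alpha(x,x)$, one sees that $\alpha$ is anisotropic iff $q_\alpha$ is anisotropic; but $q_\alpha$ is the quasi-Pfister form $\pff{a_1,\ldots,a_n}$, whose value set together with $0$ is exactly $F^2(a_1,\ldots,a_n)$. Since $\dim q_\alpha=2^n$, anisotropy is equivalent to $[F^2(a_1,\ldots,a_n):F^2]=2^n$, which is the definition of $2$-independence of $A$.

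For part (ii), I would prove (b)$\Leftrightarrow$(c) and (a)$\Leftrightarrow$(b) separately, since (b)$\Rightarrow$(a) is immediate from $L\subseteq E$. The strategy in both is induction on $r$, reducing to the case $r=1$. For (b)$\Leftrightarrow$(c) with $r=1$ (so $L=F(\sqrt{s})$, $\sigma=\pff{s}_b$), after diagonalising $\beta\cong\qf{b_1,\ldots,b_d}_b$ the map $z=x+\sqrt{s}\,y\mapsto (x,y)$ sends $z^2$ to $x^2+sy^2$, so a nontrivial zero $\sum_i b_iz_i^2=0$ of $\beta_L$ corresponds bijectively to a nontrivial zero $\sum_i b_ix_i^2+s\sum_i b_iy_i^2=0$ of $\pff{s}_b\otimes\beta$ over $F$; nontriviality is preserved because $s\notin F^2$ by the $2$-independence of $S$. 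The inductive step writes $L=L'(\sqrt{s_r})$ with $L'=F(\sqrt{s_1},\ldots,\sqrt{s_{r-1}})$, applies the base case over $L'$ and then the inductive hypothesis to $\pff{s_r}_b\otimes\beta$ to obtain isotropy of $\sigma\otimes\beta$ over $F$.

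For (a)$\Leftrightarrow$(b) I would similarly reduce $E$ to $L$ one generator at a time, so the work again lies in the case $r=1$ with $E=F(u)$ and $u^{2^n}=s$. Given a nontrivial zero $\sum_i b_iz_i^2=0$ in $E$, expand $z_i=\sum_{j=0}^{2^n-1}z_{i,j}u^j$ and square; using $v=u^2$ and $v^{2^{n-1}}=s$, equate coefficients in the $F$-basis $\{v^k:0\le k<2^{n-1}\}$ of $F(v)$ to get $\sum_ib_i(z_{i,k}^2+sz_{i,k+2^{n-1}}^2)=0$ in $F$ for each $k$. The characteristic-$2$ identity $(a+b)^2=a^2+b^2$ rewrites the left-hand side as $\sum_ib_iw_{i,k}^2$ with $w_{i,k}=z_{i,k}+\sqrt{s}\,z_{i,k+2^{n-1}}\in L$, giving an isotropic representation of $\beta$ over $L$; the assumption $s\notin F^2$ forces the $w_{i,k}$ to be not all zero. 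The inductive step iterates this reduction along a tower $F\subset F(u_1)\subset F(u_1,u_2)\subset\cdots\subset E$, where at each stage modularity combined with $2$-independence guarantees that the parameter being adjoined remains outside the square of the current base field, so the base case applies.

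For part (iii), I would apply (ii) to $\beta=\alpha$: then $\alpha_E$ is anisotropic iff $\sigma\otimes\alpha$ is anisotropic over $F$. But $\sigma\otimes\alpha=\pff{s_1,\ldots,s_r,a_1,\ldots,a_n}_b$ is itself a bilinear Pfister form, so applying (i) identifies its anisotropy with the $2$-independence of $A\cup S$. The hardest step will be the combinatorial part of (a)$\Rightarrow$(b): verifying that the elements $w_{i,k}\in L$ produce a genuinely nontrivial zero relies on $2$-independence of $S$ together with modularity of $E/F$, the latter controlling the $F$-linear structure of the basis expansion along the entire tower; without modularity the clean basis expansion breaks down and a more delicate descent argument would be needed.
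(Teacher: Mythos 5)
Your argument is correct, but it is worth noting that the paper does not actually prove parts (i) and (ii): it cites them from the literature ((i) from \cite[Lem.~8.1]{h0}, (ii) from \cite[Th.~5.2]{h1}) and only writes out the derivation of (iii) from (i) and (ii)(c), which you reproduce verbatim. What you supply instead is a self-contained elementary proof of the cited facts, and the key steps all check out: the reduction of (i) to counting the $F^2$-dimension of $D_F(\pff{a_1,\ldots,a_n})\cup\{0\}=F^2(a_1,\ldots,a_n)$ is exactly the standard argument; the $r=1$ computation $z^2=x^2+sy^2$ for (b)$\Leftrightarrow$(c); and the coefficient comparison in the basis $\{v^k\}$ of $F(u^2)$ together with $(z_{i,k}+\sqrt{s}\,z_{i,k+2^{n-1}})^2=z_{i,k}^2+sz_{i,k+2^{n-1}}^2$ for the exponent-reduction step of (a)$\Rightarrow$(b). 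Two small points you should make explicit if you write this up: first, dispose of the case where $\beta$ (or a scalar extension of it) is hyperbolic, since the paper's convention is that hyperbolic bilinear forms are not diagonalizable --- this case is trivial because all three conditions in (ii) then hold automatically; second, the assertion that at each stage of the tower the adjoined parameter stays outside the square of the current base field is most cleanly justified by a degree count, $[E:F]=2^{n_1+\cdots+n_r}$ for a modular extension over a $2$-independent set, which forces $X^{2^{n_i}}-s_i$ to remain irreducible over each intermediate field; this is where modularity is genuinely used, as you correctly flag. The trade-off is clear: the paper's citation keeps the exposition short, while your version makes the proposition independent of \cite{h0} and \cite{h1} at the cost of roughly a page of routine verification.
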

\begin{proof}
(i) See \cite[Lem.~8.1]{h0}.

(ii) See \cite[Th.~5.2]{h1}.

(iii) follows from (ii)(c) applied to $\alpha$ instead of $\beta$, together
with (i) applied to the bilinear Pfister form
$\sigma\otimes\alpha\cong\pff{s_1,\ldots, s_r,a_1,\ldots, a_n}_b$.
\end{proof}

$\alpha\cong\pff{a}_b$ is clearly anisotropic iff
$a\in F\setminus F^2$.  In this case, if $E/F$ is any field extension,
then $\alpha_E$ is isotropic iff $a\in E^2$ iff $F(\sqrt{a})\subset E$,
so $\alpha$ becomes isotropic over a quadratic extension of $F$ contained in $E$.
This will generally no longer be true once we consider anisotropic
bilinear Pfister forms of fold $n+2\geq 2$ as the following example shows.

\begin{exam}\label{ex1}
Let  $F$ be a field (of characteristic $2$ as always) with a $2$-independent set 
$\{ x,y,z,c_1,\ldots,c_n\}$.  For example, we may
take $\FF_2(x,y,z,c_1,\ldots,c_n)$, the rational function field in $n+3$ variables
over $\FF_2=\ZZ/2\ZZ$.  Let $E=F(\sqrt{z},\sqrt{xz+y})$ and 
$\pi=\pff{x,y,c_1,\ldots,c_n}_b\in BP_{n+2}F$.
Clearly, $\{ z ,xz+y,x,y,c_1,\ldots,c_n\}$ is $2$-dependent.
By Proposition \ref{bil-pf}, $\pi$ is 
anisotropic over $F$ and isotropic over $E$.  Of course, the latter can also
be checked directly by noting that $\pi\cong\qf{1,x,y,xy}_b\perp\qf{\ldots}_b$ and
$$1\cdot\sqrt{xz+y}^2+x\cdot\sqrt{z}^2+y\cdot 1^2=0.$$
We show that for any quadratic subextension $F\subset K\subset E$, we have
that $\pi_K$ is anisotropic.      Such a quadratic subextension
is of shape $K=F(a)$ with 
$$a=u\sqrt{z}+v\sqrt{xz+y}+w\sqrt{z^2x+yz}$$
for some $u,v,w\in F$,
not all equal to $0$.

By Proposition \ref{bil-pf}, $\pi_K$ is isotropic iff
$\pff{a^2,x,y,c_1,\ldots,c_n}_b$ is isotropic over $F$.  We know that
$\pff{z,x,y,c_1,\ldots,c_n}_b$ is anisotropic over $F$.  Now the (an)isotropy of these
bilinear Pfister forms is equivalent to the (an)isotropy of the associated
quasi-Pfister forms.  We show that for these quasi-Pfister forms, we will
have
$$\pff{a^2,x,y,c_1,\ldots,c_n}\cong\pff{z,x,y,c_1,\ldots,c_n},$$
over $F$, which in turn implies that $\pi_K$ is anisotropic. 

It cleary suffices to show that $\pff{a^2,x,y}\cong \pff{z,x,y}$,
which just means that
$$D_F(\pff{a^2,x,y})\cup\{ 0\} =F^2(a^2,x,y)= F^2(z,x,y)
=D_F(\pff{z,x,y})\cup\{ 0\}.$$
It then suffices to show that $z\in F^2(a^2,x,y)$ and $a^2\in F^2(z,x,y)$.
Now
$$a^2=(wz)^2x+v^2y+u^2z+v^2xz+w^2yz\in F^2(z,x,y).$$
Conversely, note that we do not have $u=v=w=0$, and since $x,y$ are $2$-independent,
we thus have $0\neq s=u^2+v^2x+w^2y\in F^2(a^2,x,y)$.  We put
$$r=a^2+(wz)^2x+v^2y=z(u^2+v^2x+w^2y)=zs\in F^2(a^2,x,y)$$
and get that $z=rs^{-1}\in F^2(a^2,x,y)$.
\qed\end{exam}

In the next example, we will exhibit an
anisotropic $n$-fold bilinear Pfister form ($n\geq 2$) that will
become isotropic over a nonmodular p.i.\ extension but which will
not become isotropic over any proper subfield.  Now
p.i.\ extensions $E/F$ with $[E:F]\leq 4$ are easily seen to be
modular (quadratic, biquadratic, or gotten through
adjoining a 4th root of some nonsquare).  The smallest possible
degree for a nonmodular extension will thus be $8$, and we will use such an
example that can be traced back to Sweedler \cite[Example 1.1]{sw}.

\begin{exam}\label{ex2}
Let $F$ be as in Example \ref{ex1}.  Let $E=F(\sqrt[4]{z},\sqrt{x\sqrt{z}+y})$.
Clearly, $\exp_2(E/F)=2$ and an easy check shows that $[E:F]=8$.  Comparing degree
and exponent shows that $E/F$ is not simple.  If $E/F$ were modular, then it would
contain a biquadratic subextension, i.e. an extension of degree $4$ and exponent $1$.
However, the only exponent $1$ subextension is $F(\sqrt{z})$.  We show this
for the reader's convenience.  Let $\zeta=\sqrt[4]{z}$, $\chi=\sqrt{x\sqrt{z}+y}$
and let $t\in E$ with $t^2\in F$.  Then there are $a_i,b_i\in F$, $0\leq i\leq 3$,
such that
$$t=\sum_{i=0}^3a_i\zeta^i+\chi\sum_{i=0}^3b_i\zeta^i,$$
and we get
$$t^2=a_0^2+a_2^2z+(b_3z)^2x+b_1^2xz+b_0^2y+b_2^2yz+
(a_1^2+a_3^2z+b_0^2x+b_2^2xz+b_1^2y+b_3^2yz)\sqrt{z}\in F,$$
so we get 
$$a_1^2+a_3^2z+b_0^2x+b_2^2xz+b_1^2y+b_3^2yz =0$$
which, by $2$-independence, implies $a_1=a_3=b_0=b_1=b_2=b_3=0$
and thus, $t=a_0+a_2\sqrt{z}\in  F(\sqrt{z})$.

Now let $\pi\cong\pff{x,y,c_1,\ldots,c_n}_b$ be as in Example \ref{ex1}.
Note that by general theory, 
the $2$-independence of $\{ z,x,y,c_1,\ldots, c_n\}$ over $F$
implies the $2$-independence of $\{ \sqrt{z},x,y,c_1,\ldots, c_n\}$
over $L=F(\sqrt{z})$.  Hence $\pi_L$ is anisotropic.  Since 
$E=L(\sqrt{\sqrt{z}},\sqrt{x\sqrt{z}+y})$, we see that we are
in the same situation as in Example \ref{ex1} but with $F$ replaced by
$L$ and $z$ replaced by $\sqrt{z}$.  This shows on the one hand that
$\pi_E$ is isotropic, and on the other hand that if $K$ is any quadratic
extension of $L$ contained in $E$, then $\pi_K$ is anisotropic.

We have that $\pi$ is isotropic over $E$ but anisotropic over any
extension of $F$ properly contained in $E$.
Indeed, if $K$ is any extension of $F$ properly contained in $E$, then $K=F$,
or $[K:F]=2$ in which case $K=L$, or  $[K:F]=4$ in which case
$K$ will contain a quadratic extension of $F$ which must be
$L$ and $K/L$ will thus be a quadratic extension.  By the above,
$\pi_K$ is anisotropic in all these cases.
\qed\end{exam}

\section{Quadratic forms over purely inseparable field extensions}
The isotropy behaviour of quadratic forms over exponent $1$ extensions has been
studied in \cite{h2}, including a determination of the Witt kernel for such
extensions, i.e., the classifiction of quadratic forms that become hyperbolic
over such extensions.  Complete results for quartic extensions can be found
in \cite{hs}, and the determination of Witt kernels for arbitrary purely
inseparable extensions can be found in \cite{so}, \cite{alo}.
We will not need the full thrust of these results but instead
we will focus primarily on  quadratic Pfister forms
and some explicit examples that have not been exhibited before in the
literature in the way we require. 

Let us remark at this point that $1$-fold quadratic Pfister forms
are of little interest in our context.  Indeed, if
$\pi\cong\qpf{a}\cong [1,a]\in P_1F$ is anisotropic and $E/F$ is any
field extension, then $\pi_E$ is isotropic iff $a\in\wp(E)=\{ e^2+e\,|\,e\in E\}$
iff the separable quadratic extension $F(\wp^{-1}(a))$
is contained in $E$ (where $\wp^{-1}(a)$ denotes a root
of the separable irreducible polynomial $X^2+X+a\in F[X]$).
In particular, $\pi$
will not become isotropic over any purely inseparable extension.

The isotropy behaviour of quadratic forms over quadratic p.i.\ extensions is
quite well understood.

\begin{lem}\label{qf-quad}
Let $q$ be an anisotropic quadratic form over $F$ and let $K=F(\sqrt{a})$,
$a\in F\setminus F^2$, be a quadratic p.i.\ extension.  Then $q_K$ is isotropic
iff $c\qf{1,a}\prec q$ for some $c\in F^*$.  In this case, $\pff{a}_b\otimes q$ is isotropic.
\end{lem}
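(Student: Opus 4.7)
The plan is to use the $F$-decomposition $K = F \oplus F\sqrt a$ to write any vector in $V \otimes_F K$, where $V$ underlies $q$, uniquely as $u + v\sqrt a$ with $u, v \in V$. A direct expansion on each $[a_i, b_i]$ block and each $\qf{c_j}$ summand of a Witt-type decomposition of $q$ gives
\[
q_K(u + v\sqrt a) \;=\; q(u) + a\,q(v) + b_q(u, v)\,\sqrt a,
\]
where $b_q$ is the polar bilinear form of $q$; the $\sqrt a$-term comes from the cross terms $x_i y_i$ in the nondegenerate blocks, while on the totally singular part $b_q$ vanishes identically in characteristic $2$. Reading off the $1$- and $\sqrt a$-components, $q_K$ is isotropic iff there exist $u, v \in V$, not both zero, with $q(u) = a\,q(v)$ and $b_q(u, v) = 0$.

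Anisotropy of $q$ rules out $u = 0$ or $v = 0$ (either forces the other to be zero), so $c := q(v) \in F^*$ and $q(u) = ac$. The relation $q(u) + a\,q(v) = 0$ with $u, v$ both nonzero is already a nontrivial isotropic vector for $q \perp aq \cong \pff{a}_b \otimes q$, which settles the final assertion of the lemma. Moreover $u, v$ are $F$-linearly independent, since $u = \lambda v$ would yield $ac = \lambda^2 c$ and thus $a \in F^2$. On the $2$-dimensional subspace $\spn_F(u, v)$ the polar form vanishes, so
\[
q(\alpha u + \beta v) \;=\; \alpha^2 q(u) + \beta^2 q(v) \;=\; c(\alpha^2 a + \beta^2),
\]
which exhibits $q$ restricted to this subspace as the totally singular form $c\qf{1, a}$; hence $c\qf{1, a} \prec q$.

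For the converse, suppose $c\qf{1, a} \prec q$ via vectors $v_1, v_2 \in V$ with $q(v_1) = c$ and $q(v_2) = ac$. Comparing $q(\alpha v_1 + \beta v_2) = \alpha^2 c + \beta^2 ac + \alpha\beta\, b_q(v_1, v_2)$ against the defining identity $q(\alpha v_1 + \beta v_2) = c\qf{1,a}(\alpha, \beta) = c\alpha^2 + ac\beta^2$ of the subform forces $b_q(v_1, v_2) = 0$. Setting $u := v_2$, $v := v_1$ then satisfies both conditions extracted in the first paragraph, so $u + v\sqrt a$ is a nonzero isotropic vector of $q_K$. The only delicate step is the characteristic-$2$ bookkeeping that identifies the $\sqrt a$-component of $q_K(u + v\sqrt a)$ with $b_q(u, v)$; once this identification is in place, everything else is mechanical linear algebra.
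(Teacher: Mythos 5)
Your proof is correct. For the main equivalence the paper simply cites \cite[proof of Lemma 5.4]{hl2}, so what you have done is supply the standard argument behind that citation: decomposing $K=F\oplus F\sqrt a$ and reading off the two components of $q_K(u+v\sqrt a)=q(u)+aq(v)+b_q(u,v)\sqrt a$ is exactly the right mechanism, and your bookkeeping (the polar form vanishing on the totally singular part, anisotropy forcing $u,v\neq 0$, linear independence of $u,v$ via $a\notin F^2$, and the vanishing of $b_q(v_1,v_2)$ in the converse) is all sound. The only point where you genuinely diverge from the paper is the final assertion: the paper deduces isotropy of $\pff{a}_b\otimes q$ from the subform condition, namely $c\qf{1,a}\prec q$ gives $c\qf{1,1,a,a}\prec \pff{a}_b\otimes q\cong q\perp aq$ and $\qf{1,1,a,a}$ is visibly isotropic, whereas you read off the explicit isotropic vector $(u,v)$ of $q\perp aq$ directly from the relation $q(u)+aq(v)=0$. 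Both are two-line arguments; yours has the small advantage of not needing to know that $\qf{a^2}\cong\qf{1}$ inside a totally singular form, while the paper's version makes the structural point that the subform condition alone (without returning to $K$) already forces the isotropy.
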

\begin{proof}
The equivalence is well known (see, e.g., the proof of \cite[Lemma 5.4]{hl2}).
Furthermore, if $c\qf{1,a}\prec q$ then
$c\pff{1,a}\cong c\qf{1,1,a,a}\prec \pff{a}_b\otimes q$, and $\qf{1,1,a,a}$
is obviously isotropic.
\end{proof}

\begin{prop}\label{qf-exp1}
Let $E/F$ be an exponent $1$ p.i.\ extension and let $\pi\in P_2F$
be anisotropic.  If $\pi_E$ is isotropic (and hence hyperbolic), there exist
$a,c\in F^*$ such that $\sqrt{a}\in E$ and $\pi\cong\qpf{a,c}$.  In particular,
$\pi$ becomes isotropic over a quadratic p.i.\ extension of $F$ contained in $E$.
\end{prop}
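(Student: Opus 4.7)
The plan is to translate the isotropy of $\pi_E$ into a norm equation via the quaternion algebra whose norm form is $\pi$, and then extract the required $a$ from the structure of $E$.

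Since $\pi_E$ being isotropic is witnessed by finitely many elements, I may assume $E = F(\sqrt{s_1}, \ldots, \sqrt{s_r})$ for a $2$-independent set $\{s_1, \ldots, s_r\} \subseteq F$, so that $E^2 = F^2(s_1, \ldots, s_r) \subseteq F$. Write $\pi \cong \qpf{b, c}$, the norm form of a (division) quaternion $F$-algebra $Q$. Then $\pi_E$ isotropic is equivalent to $Q_E$ being split, which is equivalent to $b$ lying in the image of the norm map from the \'etale $E$-algebra $E \otimes_F F(\wp^{-1}(c))$: there exist $e_0, e_1 \in E$, not both zero, with
\[
b = e_0^2 + e_0 e_1 + c e_1^2.
\]
Because $e_0^2, e_1^2 \in E^2 \subseteq F$, this relation also forces $e_0 e_1 = b + e_0^2 + ce_1^2 \in F$.

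The key structural step is to show that $e_0, e_1$ must be $F$-proportional. The subfield $F(e_0, e_1) \subseteq E$ is generated over $F$ by two elements with squares in $F$, so it has the form $F(\sqrt{\alpha_0}, \sqrt{\alpha_1})$ with $\alpha_i = e_i^2 \in F$; if it were biquadratic, then $e_0 e_1 \in F$ would give $\sqrt{\alpha_0 \alpha_1} \in F$, hence $\alpha_0 \alpha_1 \in F^2$, so $\alpha_1 \in \alpha_0 F^2$, collapsing the degree. Hence $F(e_0, e_1)/F$ has degree at most $2$. Assuming $e_1 \ne 0$ (the case $e_1 = 0$ is handled similarly, yielding $a = b$), one obtains $(e_0, e_1) = (\lambda, 1)\xi$ with $\lambda \in F$ and $\xi = e_1 \in E$, and the norm equation becomes
\[
b = (\lambda^2 + \lambda + c)\,\xi^2 = n\,\xi^2, \qquad n := \lambda^2 + \lambda + c \in D_F([1, c]).
\]

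Set $a = \xi^2 = b/n \in E^2 \cap F = F^2(s_1, \ldots, s_r)$, so $\sqrt{a} = \xi \in E$; and $a \notin F^2$, for otherwise $\xi \in F$ would give $b = na \in D_F([1, c])$, splitting $Q$ over $F$ and contradicting the anisotropy of $\pi$. Since $n \in D_F([1, c])$, roundness of the $1$-fold Pfister $[1, c]$ gives $n[1, c] \cong [1, c]$, hence $a[1, c] = (b/n)[1, c] \cong b[1, c]$, and therefore
\[
\qpf{a, c} = [1, c] \perp a[1, c] \cong [1, c] \perp b[1, c] = \qpf{b, c} = \pi,
\]
as required. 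The main technical step is the factorization $(e_0, e_1) = (\lambda, 1)\xi$, which rests on the exponent $1$ p.i.\ structure of $E/F$ together with the constraint $e_0 e_1 \in F$ derived from the norm equation.
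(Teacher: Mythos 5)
Your proof is correct, and it follows the same basic strategy as the paper's: exploit $E^2\subseteq F$ to force all but one coordinate of an explicit isotropy equation into $F$, leaving a single element $\xi\in E\setminus F$ whose square $a$ does the job. The two arguments diverge in the details, though. The paper works with the $3$-dimensional Pfister neighbor $[1,u]\perp\qf{v}$ of $\pi$, normalizes the resulting equation by dividing by $x^2$, concludes only that $\pi$ becomes hyperbolic over $F(\sqrt{a})$, and then invokes \cite[Cor.~2.8]{a} to deduce $\pi\cong\qpf{a,c}$ for \emph{some} $c$. You instead use the full norm equation $b=e_0^2+e_0e_1+ce_1^2$ (equivalently, the splitting criterion for the quaternion algebra $(b,c]$), extract the proportionality $e_0=\lambda e_1$ with $\lambda\in F$ by the same squares-in-$F$ trick, and then obtain the isometry $\qpf{a,c}\cong\qpf{b,c}$ directly from the roundness of the $1$-fold Pfister form $[1,c]$ (using that $n=\lambda^2+\lambda+c\in D_F([1,c])$ and that $D_F([1,c])$ contains $F^{*2}$). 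What your route buys is self-containedness --- no appeal to the classification of forms that become hyperbolic over an inseparable quadratic extension --- and the slightly sharper conclusion that the second slot $c$ can be kept unchanged. Two cosmetic remarks: the detour through $[F(e_0,e_1):F]\le 2$ is unnecessary, since $e_0/e_1=e_0e_1/e_1^2\in F$ follows immediately from $e_0e_1\in F$ and $e_1^2\in F^*$; and the final ``in particular'' clause deserves its one-line justification, namely that $\qf{1,a}\prec\qpf{a,c}$ becomes isotropic over $F(\sqrt{a})\subseteq E$.
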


\begin{proof}  The proof can be extracted from the proof of \cite[Th.~3.4]{h2},
but in our special situation, the argument can be condensed considerably and
we include it for the reader's convenience.
Note that by assumption, $E^2\subseteq F$.  Let $\pi\cong\qpf{u,v}\in P_2F$
be anisotropic but hyperbolic over $E$.  Then any Pfister neighbor
of $\pi$ over $F$ will also become isotropic
over $E$.  Now $[1,u]\perp\qf{v}\prec\pi\cong [1,u]\perp v[1,u]$, so there
exist $x,y,z\in E$, not all equal to $0$, with
$$0=x^2+xy+uy^2+vz^2.$$  
Then at least two of the $x,y,z$ must be nonzero.
If, say, $x\neq 0$, we may divide by $x^2$ und thus, we may assume
without loss of generality that
we have an equation
$$1+y+uy^2+vz^2=0,\ y,z\in F.$$
But $y^2,z^2\in F$ and thus also $y\in F$.  But then, the anisotropy over $F$
implies that $z\in E\setminus F$.  With $z^2=a\in F$ it follows that
$\pi$ becomes hyperbolic over $F(\sqrt{a})$ from which we can conclude that
$\pi\cong\qpf{a,c}$ for some $c\in F$ (see, e.g., \cite[Cor.~2.8]{a}).
The case $y\neq 0$ is similar.
\end{proof}

We will see later on that this result does not generalize to
$n$-fold quadratic Pfister forms for $n\geq 3$.  In the case of 
$n$-fold quadratic Pfister forms ($n\geq 2$), the result will also not generalize
to p.i.\ extensions of higher exponent, not even to simple such extensions
(in stark contrast to the case of bilinear Pfister forms!) as the next
examples will show.

In the construction of our examples, we will often use generic methods.
In particular, we will work over the field of Laurent series $F(\!(t)\!)$.
Now theories of quadratic forms over valued fields in charactersitic $2$ can
be found in the literature, but we will not need a full fledged
such theory but rather
only define some concepts and use some facts that will be needed
in our constructions to keep the paper as self-contained as possible.  

We call $a\in F(\!(t)\!)$ a unit if $a\in F[[t]]^*=F^*+tF[[t]]$.  For our purposes,
we call a quadratic form $q$ over $F(\!(t)\!)$ quasi-unimodular if $\phi$ has 
a representation 
$$\phi\cong [a_1,b_1]\perp \ldots\perp [a_r,b_r]\perp\qf{c_1,\ldots,c_s},\quad
a_i,b_i,c_j\in F[[t]]^*\cup\{ 0\}$$
(we include $0$ to make sure that the hyperbolic plane $[0,0]$ will be 
quasi-unimodular).  Then $a_i\in \alpha_i+tF[[t]]$ with some uniquely
determined $\alpha_i\in F$
(and similarly, $b_i\in \beta_i+tF[[t]]$, $c_j\in \gamma_j+tF[[t]]$ with
$\beta_i,\gamma_j\in F$).  Then we get a quadratic form over $F$ defined by
$$\ovl{\phi}=[\alpha_1,\beta_1]\perp
\ldots\perp [\alpha_r,\beta_r]\perp\qf{\gamma_1,\ldots,\gamma_s}$$
that we call the residue form for this representation.  
The following result is fairly straightforward.
\begin{lem}\label{residue}
Let $\phi$ be a quasi-unimodular quadratic form over $F$ and let $\ovl{\phi}$
be the residue form of some quasi-unimodular representation of $\phi$.
If $\ovl{\phi}$ is anisotropic then $\phi$ is anisotropic.   In this situation,
if $\ovl{\phi}'$ is the residue form of any other quasi-unimodular representation
of $\phi$, then $\ovl{\phi}\cong\ovl{\phi}'$ over $F$.
\end{lem}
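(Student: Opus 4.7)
The plan is to handle the two assertions in sequence by a standard specialization argument.

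For the first assertion, I would argue by contradiction. Suppose $\phi$ has an isotropic vector $v$ expressed in the basis realizing the given quasi-unimodular decomposition. Let $m$ be the minimum $t$-adic valuation among the coordinates of $v$. Multiplying $v$ by $t^{-m}$ yields another isotropic vector (since $\phi(t^{-m}v)=t^{-2m}\phi(v)=0$) whose coordinates all lie in $F[[t]]$ and at least one of which is in $F[[t]]^*$. Reducing these coordinates and all coefficients of the representation modulo $t$ produces a nonzero $\ovl{v}\in F^n$ with $\ovl{\phi}(\ovl{v})=0$, contradicting the anisotropy of $\ovl{\phi}$.

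For the second assertion, let $\phi_1$ and $\phi_2$ be two quasi-unimodular representations of $\phi$ with residue forms $\ovl{\phi}$ and $\ovl{\phi}'$; by hypothesis $\ovl{\phi}$ is anisotropic, so by the first assertion $\phi$ is itself anisotropic over $F(\!(t)\!)$. Using the canonical decomposition $\phi\cong \phi^{ns}\perp\phi^s$ into a nondegenerate part and a totally singular part (uniquely determined up to isometry in characteristic $2$), I would reduce to verifying the claim on each part separately. For the nondegenerate part, note that $\phi^{ns}\perp\phi^{ns}$ is hyperbolic over $F(\!(t)\!)$, since the diagonal is a totally isotropic subspace of half the ambient dimension; hence $\phi_1^{ns}\perp\phi_2^{ns}$ is hyperbolic. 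Combined with the observation that the residue of a hyperbolic quasi-unimodular nondegenerate form is hyperbolic over $F$ (for $[a,b]$ with $a,b\in F[[t]]^*$ hyperbolic over $F(\!(t)\!)$, the Arf invariant identity $ab=f^2+f$ forces $v_t(f)\ge 0$, whence the residue $\alpha\beta\in\wp(F)$, and the higher-dimensional case follows by peeling off hyperbolic planes inductively), one gets that $\ovl{\phi}^{ns}\perp\ovl{\phi}'^{ns}$ is hyperbolic over $F$. Witt cancellation, valid for nondegenerate quadratic forms in characteristic $2$, then yields $\ovl{\phi}^{ns}\cong\ovl{\phi}'^{ns}$.

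For the totally singular part, two totally singular forms are isometric iff they have equal dimension and equal $F^2$-span (resp.\ $F(\!(t)\!)^2$-span) of coefficients, as recalled in Section 2. The task therefore reduces to showing that the $F^2$-span of the $\gamma_j$'s agrees with that of the $\gamma_j'$'s, given that the $F(\!(t)\!)^2$-spans of the corresponding $c_j$'s and $c_j'$'s coincide. The main obstacle lies here: one must carefully exploit the anisotropy of $\ovl{\phi}$, which forces the $F^2$-linear independence of the $\gamma_j$'s, in order to descend the equality of value sets from $F(\!(t)\!)$ to $F$ and rule out the possibility that a different choice of quasi-unimodular basis could produce residues spanning a strictly smaller $F^2$-subspace of $F$.
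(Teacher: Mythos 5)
The paper states this lemma without proof (``fairly straightforward''), so there is no argument of the author's to compare against; judged on its own merits, your proof of the \emph{first} assertion is correct and is certainly the intended specialization argument: normalize an isotropic vector so that its coordinates lie in $F[[t]]$ with at least one unit, and reduce modulo $t$.

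The second assertion is where your proposal fails, and the failure is not just the technical obstacle you flag at the end. First, your reduction to the parts of $\phi\cong\phi^{ns}\perp\phi^{s}$ uses that the nondegenerate complement of the radical is unique up to isometry; in characteristic $2$ only the restriction to the radical is canonical, and the complement is not: e.g.\ $[a,b]\perp\qf{c}\cong[a+c,b]\perp\qf{c}$ while $[a,b]\not\cong[a+c,b]$ in general. So you cannot conclude $\phi_1^{ns}\cong\phi_2^{ns}$, and the hyperbolic sum needed for Witt cancellation is not available. Second, and more seriously, the step you defer on the totally singular part cannot be completed, because the assertion is false there as literally stated. Take $x\in F\setminus F^2$ and $\phi\cong\qf{1,x}$ over $F(\!(t)\!)$. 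Then $\qf{1,x}\cong\qf{1,1+t^2x}$ over $F(\!(t)\!)$: the value spans $F(\!(t)\!)^2+F(\!(t)\!)^2x$ and $F(\!(t)\!)^2+F(\!(t)\!)^2(1+t^2x)$ coincide, and explicitly $(x_1,x_2)\mapsto(x_1+t^{-1}x_2,\,t^{-1}x_2)$ is an isometry. Both diagonalizations are quasi-unimodular; the first has anisotropic residue $\qf{1,x}$, while the second has residue $\qf{1,1}$, which is isotropic and not isometric to $\qf{1,x}$. Hence no proof of the second sentence can exist without strengthening the hypothesis; the natural fix is to assume that \emph{both} residue forms are anisotropic, in which case your coefficient-reduction idea does work for the totally singular part (a negative minimal valuation in an expression $c_j'=\sum_i f_i^2c_i$ would, after normalizing and reducing, contradict the $F^2$-independence of the $\gamma_i$; one then gets $\spn_{F^2}(\gamma_j')\subseteq\spn_{F^2}(\gamma_i)$ and concludes by equality of dimensions). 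Note that the paper only ever invokes the first assertion, always with an explicitly exhibited representation, so its examples are unaffected.
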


\begin{prop}\label{prop-t-aniso}
Let $\rho$, $\sigma$, $\alpha$, $\beta$ be quasi-unimodular quadratic forms
over $F(\!(t)\!)$.  Assume that $\alpha$, $\beta$ 
are totally singular of the same dimension $n$ having
quasi-uni\-mod\-ular representations
$\alpha\cong\qf{a_1,\ldots,a_n}$, $\beta\cong\qf{b_1,\ldots,b_n}$.
Let
$$\begin{array}{rcl}
\psi & \cong &  \rho\perp\alpha,\\
\tau & \cong &  \sigma\perp\beta,\\
\phi & \cong &  \rho\perp t^{-1}\sigma\perp [a_1,t^{-1}b_1]\perp\ldots [a_n,t^{-1}b_n].
\end{array}$$
If $\ovl{\psi}$ and $\ovl{\tau}$ are anisotropic over $F$
then $\phi$ is anisotropic over $F(\!(t)\!)$.
\end{prop}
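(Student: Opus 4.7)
The approach I would take is a two-step $t$-adic residue argument. Assuming for contradiction that $\phi$ is isotropic, I would scale an isotropic vector by a suitable power of $t$ so that all coordinates lie in $F[[t]]$ with at least one being a unit, and then extract residue equations first at valuation $-1$ and then at valuation $0$ that together force every coordinate into $tF[[t]]$, contradicting the scaling.

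Concretely, starting from an isotropy relation
$$\rho(x) + t^{-1}\sigma(y) + \sum_{i=1}^n \bigl(a_i v_i^2 + v_iw_i + t^{-1}b_iw_i^2\bigr) = 0,$$
with $x, y, v_i, w_i$ integral and at least one entry a unit, the left-hand side lies in $t^{-1}F[[t]]$ and the coefficient of $t^{-1}$ receives contributions only from $t^{-1}\sigma(y)$ and from the terms $t^{-1}b_iw_i^2$. Passing to residues yields
$$\ovl{\tau}(\ovl{y},\ovl{w}) = \ovl{\sigma}(\ovl{y}) + \ovl{\beta}(\ovl{w}) = 0,$$
so anisotropy of $\ovl{\tau}$ forces $\ovl{y}=\ovl{w}=0$, i.e.\ $y=ty'$ and $w_i=tw_i'$ with $y', w_i'$ integral. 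Substituting and using $\sigma(ty')=t^2\sigma(y')$ and $(tw_i')^2=t^2(w_i')^2$, the $t^{-1}$-terms cancel and the equation reduces to
$$\bigl[\rho(x) + \alpha(v)\bigr] + t\Bigl[\sigma(y') + \sum_i v_iw_i' + \beta(w')\Bigr] = 0$$
in $F[[t]]$, whose constant term is $\ovl{\psi}(\ovl{x},\ovl{v})=\ovl{\rho}(\ovl{x}) + \ovl{\alpha}(\ovl{v})=0$. Anisotropy of $\ovl{\psi}$ then forces $\ovl{x}=\ovl{v}=0$, and now every coordinate lies in $tF[[t]]$, contradicting the choice of scaling.

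The main step requiring care, and the one I would double-check, is the book-keeping around the cross terms $v_iw_i$: they carry no $t^{-1}$ and so contribute nothing to the first residue equation, and after the substitution $w_i\mapsto tw_i'$ they become $tv_iw_i'$, absorbed into the explicit $t$-multiple on the right and so contributing nothing to the second residue equation either. Once that is verified, the proof is essentially two successive applications of the principle behind Lemma \ref{residue}, one at each of the two relevant valuations.
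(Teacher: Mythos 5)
Your proof is correct and follows essentially the same route as the paper's: the paper also assumes an isotropy relation with integral coordinates not all divisible by $t$ and then reads off the coefficients of $t^{-1}$ and $t^{0}$ to produce an isotropic vector for $\ovl{\tau}$ or $\ovl{\psi}$. Your write-up is in fact more detailed than the paper's sketch, in particular in tracking the cross terms $v_iw_i$ and the substitution $y=ty'$, $w=tw'$, and that book-keeping checks out.
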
 
\begin{proof}  The proof uses a fairly standard constant coefficient
argument which we will only
sketch.  Suppose $\phi$ is isotropic. 
Let $\dim\rho=r$, $\dim\sigma=s$. By multiplying
by a suitable power of $t$,
one may assume that there are $e_i,f_j,g_k,h_k \in F[[t]]$
($1\leq i\leq r$, $1\leq j\leq s$, $1\leq k\leq n$), not all divisible by $t$,
such that
$$0=\rho(e_1,\ldots,e_r)+t^{-1}\sigma(f_1,\ldots,f_s)+
\sum_{k=1}^n\bigl(a_kg_k^2+g_kh_k+t^{-1}b_kh_k^2\bigr).$$
By considering the constant terms of the involved power series
and by computing the coefficient of $t^0$ resp. $t^{-1}$ in the above
equation, one readily gets that we find an equation
$$\ovl{\rho}(\epsilon_1,\ldots,\epsilon_r)+\ovl{\alpha}(\delta_1,\ldots,\delta_n)=0
\quad\mbox{with $\epsilon_i,\delta_k\in F$, not all equal to zero,}$$
or 
$$\ovl{\sigma}(\gamma_1,\ldots,\gamma_s)+\ovl{\beta}(\mu_1,\ldots,\mu_n)=0
\quad\mbox{with $\gamma_j,\mu_k\in F$, not all equal to zero,}$$
showing that $\ovl{\psi}$ or $\ovl{\tau}$ is isotropic over $F$.
\end{proof}

\begin{cor}\label{cor-t-aniso} Let $a,a_1,\ldots,a_n\in F[[t]]^*$
such that $b\cong\qf{a_1,\ldots,a_n}_b$ is an anisotropic
bilinear form over $F(\!(t)\!)$ and with
associated totally singular quasi-unimodular quadratic form
$q_b\cong\qf{a_1,\ldots,a_n}$.  If $\ovl{q_b}$ is anisotropic over $F$
then $b\otimes [1,at^{-1}]$ is anisotropic over $F(\!(t)\!)$.
\end{cor}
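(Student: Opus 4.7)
The plan is to apply Proposition~\ref{prop-t-aniso} with $\rho=\sigma=0$. The first task is to put $b\otimes[1,at^{-1}]$ into the shape of $\phi$ from that proposition. Expanding the tensor along the orthogonal basis of $b$ gives $b\otimes[1,at^{-1}]\cong a_1[1,at^{-1}]\perp\ldots\perp a_n[1,at^{-1}]$, and the substitution $v\mapsto v/a_i$ in the binary form $a_iu^2+a_iuv+a_iat^{-1}v^2$ exhibits the isometry $a_i[1,at^{-1}]\cong [a_i,(a/a_i)t^{-1}]$. Setting $b_i:=a/a_i\in F[[t]]^*$ (a unit, since both $a$ and $a_i$ are), we obtain
$$b\otimes[1,at^{-1}]\cong [a_1,t^{-1}b_1]\perp\ldots\perp [a_n,t^{-1}b_n],$$
which matches $\phi$ in Proposition~\ref{prop-t-aniso} with $\rho=\sigma=0$, $\alpha\cong q_b=\qf{a_1,\ldots,a_n}$, and $\beta\cong\qf{b_1,\ldots,b_n}=\qf{a/a_1,\ldots,a/a_n}$.

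With these choices $\psi\cong q_b$ and $\tau\cong\beta$, and the corresponding residue forms over $F$ are $\ovl{\psi}=\ovl{q_b}$ and $\ovl{\tau}=\qf{\ovl{a}/\ovl{a_1},\ldots,\ovl{a}/\ovl{a_n}}$, where $\ovl{a},\ovl{a_i}\in F^*$ are the constant terms of the units $a,a_i\in F[[t]]^*$. The form $\ovl{\psi}$ is anisotropic by hypothesis, so the only remaining step is to verify that $\ovl{\tau}$ is anisotropic. Since multiplication by the nonzero element $\ovl{a}\in F^*$ preserves anisotropy of totally singular quadratic forms, this reduces to showing that $1/\ovl{a_1},\ldots,1/\ovl{a_n}$ are $F^2$-linearly independent in $F$.

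This is the only step with any real content, and it is a short manipulation: if $\sum_i\lambda_i/\ovl{a_i}=0$ with $\lambda_i\in F^2$, then multiplying through by the $F^2$-element $\prod_j\ovl{a_j}^2$ yields $\sum_i\mu_i\ovl{a_i}=0$ with $\mu_i:=\lambda_i\prod_{j\neq i}\ovl{a_j}^2\in F^2$. The anisotropy of $\ovl{q_b}$ tells us that the $\ovl{a_i}$ are $F^2$-linearly independent, forcing each $\mu_i=0$, and since the $\ovl{a_j}$ are nonzero each $\lambda_i=0$. With both $\ovl{\psi}$ and $\ovl{\tau}$ now anisotropic, Proposition~\ref{prop-t-aniso} delivers the conclusion that $b\otimes[1,at^{-1}]$ is anisotropic over $F(\!(t)\!)$. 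The main thing to get right is the opening decomposition: the rescaling forces the second slot of the $i$-th binary factor to be $a/a_i$ rather than the naive $aa_i$, which is what necessitates the short inversion argument for $\ovl{\tau}$.
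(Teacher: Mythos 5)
Your proof is correct and follows essentially the same route as the paper: the same decomposition $b\otimes[1,at^{-1}]\cong\bbperp_{i=1}^n[a_i,aa_i^{-1}t^{-1}]$ followed by Proposition~\ref{prop-t-aniso} with $\rho=\sigma=0$, $\alpha\cong q_b$ and $\beta\cong a\qf{a_1^{-1},\ldots,a_n^{-1}}$. The only cosmetic difference is your hands-on verification that $\ovl{\tau}$ is anisotropic; the paper gets this more quickly from the isometry $\qf{a_1^{-1},\ldots,a_n^{-1}}\cong\qf{a_1,\ldots,a_n}$ of totally singular forms (since $a_i^{-1}=a_i\cdot(a_i^{-1})^2$), which is exactly the content of your $F^2$-linear-independence computation.
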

\begin{proof}
Note that $b$ is anisotropic iff the associated quadratic form
$q_b$ is anisotropic.
Also,
$$b\otimes [1,at^{-1}]\cong\bbperp_{i=1}^na_i[1,at^{-1}]
\cong\bbperp_{i=1}^n[a_i,aa_i^{-1}t^{-1}],$$
and we can apply Proposition \ref{prop-t-aniso} to
$\psi\cong\alpha$ and 
$$\tau\cong\beta\cong a\qf{a_1^{-1},\ldots,a_n^{-1}}
\cong a\qf{a_1,\ldots,a_n}\cong\alpha.$$
\end{proof}

In the next example, we extend Example \ref{ex1} by showing
that to each $n\geq 0$, there are examples of fields over which
there exist anisotropic $(n+3)$-fold quadratic Pfister forms that
become isotropic over a biquadratic p.i.\ extension but stay anisotropic
over any quadratic subextension.

\begin{exam}\label{ex1a}
Let $F$, $E$, $\pi$ be as in Example \ref{ex1}, put $L=F(\!(t)\!)$, 
$$M=L(\sqrt{z},\sqrt{xz+y})=F(\sqrt{z},\sqrt{xz+y})(\!(t)\!)=E(\!(t)\!)$$
and
$q=\pi\otimes [1,t^{-1}]\in P_{n+3}L$.  Obviously, $\pff{z,x,y,c_1,\ldots,c_n}$
is quasi-unimodular with anisotropic residue form (which essentially
is the form with the same coefficients), and by Corollary \ref{cor-t-aniso},
it follows that $\pff{z}_b\otimes q$ is anisotropic and thus also $q$.
But as in  Example \ref{ex1}, $\pi_E$ and hence $\pi_M$ and $q_M$ are isotropic.

Let $K$ be any quadratic extension of $L$ contained in $M$.  As in
Example \ref{ex1}, we may write $K=L(a)$ where
$$a=u\sqrt{z}+v\sqrt{xz+y}+w\sqrt{z^2x+yz}$$
with $u,v,w\in L$, not all
equal to $0$.  Multiplying by a suitable power of $t$, we may
furthermore assume that $u=u_0+u'$, $v=v_0+v'$, $w=w_0+w'$
with $u_0,v_0,w_0\in F$, not all equal to $0$,
and $u',v',w'\in tF[[t]]$.  Let 
$$\ovl{a}=u_0\sqrt{z}+v_0\sqrt{xz+y}+w_0\sqrt{z^2x+yz}.$$
Then $a^2\in \ovl{a}^2+tF[[t]]$.
Using the $2$-independence of $x,y,z$, and the fact that we do not have
$u_0=v_0=w_0=0$, we have $\ovl{a}^2\neq 0$ and thus 
$a^2\in F[[t]]^*$.  As in Example \ref{ex1},
we get (over $F$!)
$$\pff{\ovl{a}^2,x,y,c_1,\ldots,c_n}\cong \pff{z,x,y,c_1,\ldots,c_n}$$
which is anisotropic over $F$.  

By the above, $\pff{a^2,x,y,c_1,\ldots,c_n}$ is quasi-unimodular
with anisotropic residue form $\pff{\ovl{a}^2,x,y,c_1,\ldots,c_n}$.
By Corollary \ref{cor-t-aniso}, it follows that
$$\pff{a^2,x,y,c_1,\ldots,c_n}_b\otimes [1,t^{-1}]\cong
\pff{a^2}_b\otimes q$$
is anisotropic over $L$.  Therefore, with $L(a)=K$ and by 
Lemma \ref{qf-quad}, $q_K$ is anisotropic.
\qed\end{exam}

\begin{exam}\label{ex1b}
In the previous example,  we have constructed fields $L$ over
which there exist an anisotropic $(n+3)$-fold Pfister form $\pi$ ($n\geq 0$)
over $L$ and a p.i.\ extension
$M$ of $L$ of degree $4$
such that $\exp_2(M/L) = 1$ and $\pi$ becomes isotropic over $M$
but stays anisotropic over
any {\em simple} extension $K/L$ contained in $E$ (since in
that example, $M/L$ is p.i.\ biquadratic,
any such simple subextensions  will be p.i.\ quadratic).

One can easily generalize this as follows:

\medskip

\noindent{\em 
For any integers $n\geq 0$, $m\geq 2$ and $\ell$ with $1\leq \ell\leq \max\{1,m-2\}$
there exist 
a field $L$, an anisotropic $(n+3)$-fold Pfister form $\pi$ over $L$ and a 
p.i.\ extension $M$ of $L$ of degree $2^m$  and exponent $\exp_2(M/L) = \ell$
such that $\pi$ becomes isotropic over $M$ but $\pi$ stays anisotropic
over any simple extension $K$ of $L$ contained in $M$.}

\medskip

Indeed, choose integers $r\geq 0$ and $1\leq m_1,\ldots,m_r\leq \ell$ such that
$$m=2+m_1+\ldots +m_r\quad\mbox{and}\quad \mbox{$m_1=\ell$ if $m\geq 3$}$$
(this is always possible under the assumptions).
Assume furthermore that $L=F(\!(t)\!)$ where this time $F$
has a $2$-independent set
$$\{ z,x,y,c_1,\ldots,c_n,b_1,\dots,b_r\}.$$
Again, we choose $\pi\cong\qpf{x,y,c,\ldots,c_n,t^{-1}}$ over $L$
This time, we define
$$\begin{array}{rcl}
F' & = & F\bigl(\sqrt[2^{m_1}]{b_1},\ldots,\sqrt[2^{m_r}]{b_r}\bigr)\\[1ex]
L' & = & F'(\!(t)\!)\\[1ex]
M & = & L\bigl(\sqrt{z},\sqrt{xz+y},\sqrt[2^{m_1}]{b_1},
\ldots,\sqrt[2^{m_r}]{b_r}\bigr)
\end{array}$$
Note that
$$M=L'(\sqrt{z},\sqrt{xz+y})=F'(\sqrt{z},\sqrt{xz+y})(\!(t)\!).$$
By $2$-independence, one easily checks that the p.i.\ extension $M/L$
satisfies the required properties regarding degree and exponent.
Furthermore, by the general theory of $2$-independence,
$$\bigl\{ z,x,y,c_1,\ldots,c_n,\sqrt[2^{m_1}]{b_1},
\ldots,\sqrt[2^{m_r}]{b_r}\bigr\}$$
is a $2$-independent set over $F'$ (and over $L'$),
thus, we can now proceed exactly as in Example \ref{ex1a} and conclude
that $\pi_{L'}$ is anisotropic, $\pi_M$ is isotropic, and $\pi_{K'}$ is 
anisotropic over any simple extension $K'$ of $L'$ contained in $M$.

Now let $K=L(w)$ be any simple extension of $L$ contained in $M$.  Then
obviously $K':=KL'=L'(w)$ is a simple extension of $L'$ contained in $M$,
hence $\pi_{K'}$ is anisotropic and thus also $\pi_K$ since 
$K\subseteq K'$.
\qed\end{exam}

\begin{exam}\label{ex2a}
Let $F$, $E$, $\pi$ be as in Example \ref{ex2}, put $L=F(\!(t)\!)$, 
$${\textstyle M=L(\sqrt[4]{z},\sqrt{x\sqrt{z}+y})=
F(\sqrt[4]{z},\sqrt{x\sqrt{z}+y})(\!(t)\!)=E(\!(t)\!)}$$ and
$q=\pi\otimes [1,t^{-1}]\in P_{n+3}L$.
As in Example \ref{ex2a}, $M/L$ is nonmodular of degree $8$,
and the only quadratic extension of $L$
inside $M$ is $L(\sqrt{z})=F(\sqrt{z})(\!(t)\!)$.  
As in Example \ref{ex1a}, 
the $2$-independence of $\{ \sqrt{z},x,y,c_1\ldots,c_n\}$ over $L(\sqrt{z})$
implies the anisotropy of $q_{L(\sqrt{z})}$, and  $M/L(\sqrt{z})$ 
is a biquadratic extension.  Combining the 
arguments in Example \ref{ex1a} with those in Example \ref{ex2} shows that
if $K$ is any extension of $L$ properly contained in $M$ then $q_K$ is 
anisotropic.  We leave the details to the reader.
\qed\end{exam}

We now want to construct examples of anisotropic $(n+2)$-fold Pfister forms
over a field $F$ that become isotropic over a simple p.i.\ extension
$E/F$ of exponent $m\geq 2$ but which stay anisotropic over any subextension
of exponent $\leq m-1$.  For this, we collect two lemmas that we will need
in our construction.

The first lemma is well known (and easy to prove).  We assume as usual
that we have characteristic $2$, but of course it holds more generally
in any positive characteristic $p$.
\begin{lem}\label{filter}  Let $F$ be a field and $m\geq 1$ be an integer.
Let $x\in F\setminus F^2$ and $E=F\bigl(\sqrt[2^m]{x}\bigr)$.  Then
$[E:F]=2^m$ and all the intermediate fields are given in the following filtration
of successive quadratic p.i.\ extensions:
$$F\subset F\bigl(\sqrt{x}\bigr)\subset  F\bigl(\sqrt[4]{x}\bigr)\subset\ldots
\subset F\bigl(\sqrt[2^{m-1}]{x}\bigr)\subset F\bigl(\sqrt[2^m]{x}\bigr)=E.$$
\end{lem}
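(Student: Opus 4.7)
The plan is to show $[E:F]=2^m$ via a minimal polynomial computation and then read off all intermediate fields from that. Set $\alpha_k = \sqrt[2^k]{x}$ for $0\leq k\leq m$, so $\alpha_{k+1}^2 = \alpha_k$, $\alpha_0 = x$, and $E = F(\alpha_m)$. I would first argue that the minimal polynomial of $\alpha_m$ over $F$ is $X^{2^m} - x$. Since $E/F$ is purely inseparable, this minimal polynomial has the form $(X - \alpha_m)^d$ in an algebraic closure; writing $d = 2^s e$ with $e$ odd and expanding $(X - \alpha_m)^d = (X^{2^s} - \alpha_m^{2^s})^e$ in characteristic $2$, the next-to-leading coefficient forces $e\alpha_m^{2^s}\in F$, hence $\alpha_m^{2^s}\in F$. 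Thus $d$ is the smallest power of $2$ for which $\alpha_m^{2^s}\in F$. Since $\alpha_m^{2^s} = x^{1/2^{m-s}}$, any $s<m$ would yield, upon repeated squaring, $\sqrt{x}\in F$, contradicting $x\notin F^2$. So $d = 2^m$ and $[E:F] = 2^m$.

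The tower $F \subseteq F(\alpha_1) \subseteq F(\alpha_2) \subseteq \ldots \subseteq F(\alpha_m) = E$ has each successive step of degree at most $2$ (since $\alpha_{k+1}^2 = \alpha_k$), and multiplicativity of degrees together with the total degree $2^m$ forces each step to be of degree exactly $2$; this gives the displayed filtration with $[F(\alpha_k):F] = 2^k$.

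For exhaustiveness, let $K$ be any field with $F \subseteq K \subseteq E$. Then $E = K(\alpha_m)$ is purely inseparable over $K$, so by the same minimal-polynomial argument applied now over $K$, one has $[E:K] = 2^j$ where $j$ is the least integer with $\alpha_m^{2^j}\in K$. It follows that $[K:F] = 2^{m-j}$. Setting $k = m-j$, we have $\alpha_m^{2^j} = \alpha_k\in K$, so $F(\alpha_k)\subseteq K$; equality of degrees $2^k$ then forces $K = F(\alpha_k)$, placing $K$ in the displayed filtration.

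The only mildly technical step is the characterization of the minimal polynomial of a purely inseparable element in characteristic $2$; once that is in hand, everything else is routine bookkeeping with multiplicativity of degrees.
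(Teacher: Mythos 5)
Your proof is correct and complete. The paper itself gives no argument here --- it records the lemma as ``well known (and easy to prove)'' --- and what you supply is precisely the standard proof it has in mind: the observation that the minimal polynomial of a purely inseparable element in characteristic $2$ is $X^{2^s}-a$ (which you correctly extract from the next-to-leading coefficient of $(X^{2^s}-\alpha_m^{2^s})^e$ with $e$ odd), followed by degree bookkeeping in the tower $F\subseteq F(\alpha_1)\subseteq\cdots\subseteq F(\alpha_m)$ and the exhaustiveness argument identifying any intermediate $K$ with $F(\alpha_{m-j})$ where $2^j=[E:K]$. No gaps.
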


The next lemma provides some relations for quadratic forms over fields of
characteristic $2$.
\begin{lem}\label{relation} Let $u,v\in F^*$ and $m\geq 1$ be an integer.
\begin{enumerate}
\item[(i)]  $\qpf{u,uv}\cong\qpf{v,uv}$.  In particular, $\qpf{u,uv}$ will be
isotropic over $F(\sqrt{v})$.
\item[(ii)] $[1,u^{2^m}]\cong [1,u]$.
\end{enumerate}
\end{lem}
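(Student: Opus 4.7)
The plan for the first statement of (i) is to decompose both $2$-fold Pfister forms as an orthogonal sum of two $2$-dimensional pieces and show the pieces agree after a scaling. By definition,
$$\qpf{u,uv}\cong [1,uv]\perp u[1,uv]\quad\mbox{and}\quad
\qpf{v,uv}\cong [1,uv]\perp v[1,uv].$$
The key manipulation is the general scaling identity $a[1,b]\cong [a,b/a]$ for $a\in F^*$, which follows immediately from the substitution $(x,y)\mapsto (x,y/a)$ in $a(x^2+xy+by^2)$. Applied with $a=u$, $b=uv$ this gives $u[1,uv]\cong [u,v]$, and with $a=v$ it gives $v[1,uv]\cong [v,u]$. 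Since $[u,v]\cong [v,u]$ via swapping the two coordinates, the two decompositions coincide and $\qpf{u,uv}\cong\qpf{v,uv}$.

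For the ``in particular'' part of (i), I use the bilinear factor of the Pfister decomposition: the $1$-fold bilinear Pfister form $\pff{v}_b=\qf{1,v}_b$ becomes isotropic (and hence metabolic, being a Pfister form) over $F(\sqrt v)$, witnessed by $1\cdot(\sqrt v)^2+v\cdot 1^2=0$. Tensoring a metabolic bilinear form with the nondegenerate quadratic form $[1,uv]$ yields a hyperbolic quadratic form, so $\qpf{v,uv}\cong\pff{v}_b\otimes [1,uv]$ is hyperbolic, and in particular isotropic, over $F(\sqrt v)$. The isometry just established then transports the isotropy to $\qpf{u,uv}$.

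For part (ii), I proceed by induction on $m\geq 1$. The base case $[1,u^2]\cong [1,u]$ I verify by the linear change of variable $x\mapsto x+uy$, $y\mapsto y$: the form $x^2+xy+u^2y^2$ transforms to $(x+uy)^2+(x+uy)y+u^2y^2=x^2+xy+uy^2$, where the cancellation $u^2y^2+u^2y^2=0$ in characteristic $2$ is what makes the argument go through. For the inductive step with $m\geq 2$, writing $u^{2^m}=(u^{2^{m-1}})^2$ and applying the base case with $u^{2^{m-1}}$ in place of $u$ yields $[1,u^{2^m}]\cong [1,u^{2^{m-1}}]$, and the induction hypothesis gives $[1,u^{2^{m-1}}]\cong [1,u]$. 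Neither part presents any serious obstacle --- both reduce to explicit $2$-dimensional manipulations --- with the only mildly subtle ingredient being the scaling identity $a[1,b]\cong [a,b/a]$ used in part~(i).
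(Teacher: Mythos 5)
Your proof is correct. The isometry in (i) is established exactly as in the paper: both arguments rest on the chain $u[1,uv]\cong [u,v]\cong v[1,uv]$, your scaling identity $a[1,b]\cong [a,b/a]$ being precisely the computation the paper performs implicitly. You diverge in two minor places. For the ``in particular'' clause the paper simply observes that the totally singular subform $\qf{1,v}\prec\qpf{u,uv}$ is isotropic over $F(\sqrt{v})$ (witnessed by $(\sqrt{v},1)$); your route through the metabolicity of $\pff{v}_b$ over $F(\sqrt{v})$ and the standard fact that a metabolic bilinear form tensored with a nondegenerate quadratic form is hyperbolic is equally valid and gives the slightly stronger (though unneeded) conclusion that the form becomes hyperbolic, not merely isotropic. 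For (ii) the paper argues via the Arf invariant: $[1,a]\cong [1,b]$ iff $a\equiv b\bmod\wp(F)$, together with $u^{2^m}+u=\wp\bigl(u+u^2+\ldots+u^{2^{m-1}}\bigr)\in\wp(F)$. Your explicit substitution $x\mapsto x+uy$ proving $[1,u^2]\cong [1,u]$, followed by induction on $m$, is a more elementary and self-contained argument that avoids appealing to the classification of binary nondegenerate forms; in effect it reproves the relevant instance of the Arf-invariant criterion by hand. Both approaches are sound.
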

\begin{proof}
(i) We have 
$$\qpf{u,uv}\cong [1,uv]\perp u[1,uv]\cong [1,uv]\perp [u,v]\cong 
[1,uv]\perp v[1,uv]\cong \qpf{v,uv}.$$
Therefore, $\qf{1,v}\prec\qpf{u,uv}$ will be isotropic over $F(\sqrt{v})$.

(ii) This follows since $u^{2^m}\equiv u\bmod\wp(F)$ (essentially, this is a 
comparison of the so-called Arf-invariants of the two forms).
\end{proof}

\begin{exam}\label{ex3}
Let $F$ be a field (of characteristic $2$ as usual) with 
elements
$a_1,\ldots,a_n$, $x,y\in F^*$ ($n\geq 0$), let $m\geq 2$ be an integer
and let
$$\xi=\sqrt[2^{m-1}]{x},\ E'=F(\xi)\ \mbox{and}\ 
E=E'(\sqrt{\xi})=F\bigl(\sqrt[2^{m}]{x}\bigr).$$
We define the
Pfister form 
$$\pi\cong\qpf{a_1,\ldots,a_n,y,y^{2^{m-1}}x}.$$
Note that $\pi$ will be isotropic over $E$ since over $E'$,
we have by Lemma \ref{relation}(ii) that
$$\qpf{y,y^{2^{m-1}}x}_{E'}\cong \qpf{y,(y\xi)^{2^{m-1}}}_{E'}
\cong  \qpf{y,y\xi}_{E'}$$
which is isotropic over $E=E'(\sqrt{\xi})$ by Lemma \ref{relation}(i).

If we now can find a field $F$ with elements $a_i,x,y$ as above 
such that $[E:F]=2^m$ (or, equivalently, $x\in F\setminus F^2$) and
$\pi_{E'}$ anisotropic, then we have our example:
$\pi$ will be an anisotropic $(n+2)$-fold Pfister form
over $F$ that will become isotropic over a simple p.i.\ extension
$E$ with $\exp_2(E/F)=m$, but, by Lemma \ref{filter}, $\pi$ will stay
anisotropic over any extension  of $F$ of exponent $\leq m-1$ that is
contained in $E$.

For this, let $F_0$ be a field with $2$-independent elements
$a_1,\ldots,a_n,y\in F_0^*$ and put
$$E=F_0(\!(t)\!)\ \mbox{and}\ F=F_0(\!(t^{2^m})\!)\ .$$
Furthermore, put $x=t^{-2^m}$.  Clearly, $E=F(\sqrt[2^m]{x})$.
We put $\tau=t^2$ so that the above $\xi$ will become $\xi=t^{-2}=\tau^{-1}$.
We thus get the filtration
$$F=F_0(\!(t^{2^m})\!)\subset F_0(\!(t^{2^{m-1}})\!)\subset\ldots
\subset F_0(\!(\tau)\!)=E'\subset F_0(\!(t)\!)=E'(\sqrt{\xi})=E$$
of all intermediate fields between $F$ and $E$ (cf.\ Lemma \ref{filter}).

Since $a_1,\ldots,a_n,y\in F_0^*$ are $2$-independent, 
the form $\pff{a_1,\ldots,a_n,y}_b$ is anisotropic over $F_0$.
Also, over $E'=F_0(\!(\tau)\!)$ and by Lemma \ref{relation}(ii), we have
$$\bigl[1,y^{2^{m-1}}x\bigr]\cong \bigl[1,(y\tau^{-1})^{2^{m-1}}\bigr]\cong
\bigl[1,y\tau^{-1}\bigr],$$
we can apply Corollary \ref{cor-t-aniso} to conclude that
$$\pi_{E'}\cong \pff{a_1,\ldots,a_n,y}_b\otimes \bigl[1,y^{2^{m-1}}x\bigr]
\cong \pff{a_1,\ldots,a_n,y}_b\otimes \bigl[1,y\tau^{-1}\bigr]$$
is anisotropic over $E'$
\qed\end{exam}

\begin{exam}\label{ex3a}
In the previous example, we have constructed a field $F$ with
an anisotropic $(n+2)$-fold Pfister form, $n\geq 0$, and a simple p.i.\ extension
$E/F$ with $[E:F]=2^m$ and $\exp_2(E/F)=m\geq 2$ such that
$\pi$ becomes isotropic over $E$ but $\pi$ will stay anisotropic
over any extension $K/F$ of exponent $\leq m-1$ contained in $E$.

Again, this can be generalized as follows: 

\medskip 

\noindent{\em Given any integers $n\geq 0$,
$m\geq \ell\geq 2$, there exists
a field $F$ with an anisotropic $(n+2)$-fold Pfister form $\pi$ over $F$
and with a p.i.\ extension $M/F$ with $[M:F]=2^m$ and $\exp_2(M/F)=\ell$
such that $\pi_M$ is isotropic, but $\pi_K$ is anisotropic for any
extension $K$ of $F$ contained in $M$ with $\exp_2(K/F)\leq \ell -1$.}

\medskip

Indeed, let $r\geq 0$ and $1\leq m_1,\ldots,m_r\leq \ell$ be integers with
$$m=\ell+m_1+\ldots +m_r$$
(this is always possible under the assumptions).
Let us proceed as in Example \ref{ex3} by choosing a field
$F_0$ but this time with a $2$-independent set 
$$\{ y,a_1,\ldots,a_n,b_1,\ldots,b_r\},$$
we put $F=F_0(\!(t^{2^\ell})\!)$ and $x=t^{-2^\ell}$
and $\pi\cong\qpf{a_1,\ldots,a_n,y,y^{2^{\ell-1}}x}$.
Let 
$$\begin{array}{rcl}
F_0' & = & F_0\bigl(\sqrt[2^{m_1}]{b_1},\ldots,\sqrt[2^{m_r}]{b_r}\bigr)\\[1ex]
F' & = & F_0'(\!(t^{2^\ell})\!)\\[1ex]
M & = & F\bigl(\sqrt[2^\ell]{x},\sqrt[2^{m_1}]{b_1},
\ldots,\sqrt[2^{m_r}]{b_r}\bigr)
\end{array}$$
Note that
$M=F'\bigl(\sqrt[2^\ell]{x}\bigr)$.
Using $2$-independence,
one readily checks that the p.i.\ extension $M/F$ satisfies
$[M:F]=2^m$ and $\exp_2(M/F)=\ell$.

Since 
$${\textstyle \bigl\{ y,a_1,\ldots,a_n,
\sqrt[2^{m_1}]{b_1},\ldots,\sqrt[2^{m_r}]{b_r}\bigr\}}$$
is a $2$-independent set over $F_0'$ (cf. Example \ref{ex1b}), we conclude
as in Example \ref{ex3} that $\pi_M$ will be isotropic,
but $\pi_{K'}$ will be anisotropic
over any extension $K'$ of $F'$ contained in $M$ with $\exp_2(K'/F')\leq \ell-1$
(note that in Example \ref{ex3} we are in the case $m=\ell$ and $r=0$).

Now let $K$ be any extension of $F$ contained in $M$ with $\exp_2(K/F)\leq \ell -1$.
Then clearly $\exp_2(KF'/F')\leq \ell -1$, so by the above, $\pi$ stays
anisotropic over $KF'$ and thus over $K$.
\qed\end{exam}

\section{Applications to quaternions and octonions}
We refer to \cite{sv} concerning basic properties of quaternions and octonions
that we will state in the sequel.
Let $a,b\in F^*$, $c\in F$. 
Recall that a quaternion algebra $Q=(b,c]_F$
over such a field $F$ of characteristic $2$ is a $4$-dimensional
central simple (associative) algebra generated by two elements
$i,j$ subject to the relations
$$i^2+i=c, j^2=b, ji=(i+1)j.$$
$Q$ carries an $F$-linear involution $\sigma$ given by $\sigma(i)=i+1$, 
$\sigma(j)=j$. 
An octonion algebra $O=(a,b,c]_F$ over $F$ 
is an $8$-dimensional nonassociative composition algebra that
can be gotten from a quaternion algebra $Q$ as above
through the Cayley-Dickson construction by defining
$O=Q\oplus Q\ell$ where the product is given by
$$(x\oplus y\ell)(z\oplus w\ell)=
(xz+a\sigma(w)y)\oplus (wx+y\sigma(z))\ell,\ x,y,z,w\in Q.$$
$O$ carries an involution $\tau$ given by
$$\tau(x\oplus y\ell)=\sigma(x)\oplus y\ell.$$ 
On $Q$ (resp. $O$) one has a norm given by $N_Q(x)=x\sigma(x)$ (resp.
$N_O(x)=x\tau(x)$) which defines a quadratic Pfister form
on $Q$ (resp. $O$) given by 
$\eta_Q\cong\qpf{b,c}$ (resp. $\eta_O\cong\qpf{a,b,c}$).

It is well known that the isometry class of the norm form determines
the isomorphism class of the quaternion (resp. octonipn algebra).
That means, if $A_i$, $i=1,2$, are quaternion (resp. octonion) algebras with
respective norm forms $\eta_i=\eta_{A_i}$, then $A_1\cong A_2$ as
algebras iff $\eta_1\cong\eta_2$ as quadratic forms.  Furthermore,
$A_i$ is division iff $\eta_i$ is anisotropic.  

Using this correspondence
between the division property of quaternion/octonion algebras and
the anisotropy of their associated norm forms and by invoking our 
Examples \ref{ex1b} and \ref{ex3a}, we now give our version
of the negative answer to the question posed by M\"uhlherr and Weiss
from the introduction.

\begin{cor}
To any integers $m\geq 2$ and $\ell$ with $1\leq \ell\leq\max\{1,m-2\}$
(resp.\ $2\leq \ell\leq m$),
there exists a field $F$ of characteristic $2$ together with a 
p.i.\ extension $M/F$ of degree $2^m$ and exponent $\ell$,
and with an octonion division algebra $O$
(resp.\ a quaternion division algebra $Q$ and an octonion division algebra
$O$) over $F$ such that $O_M$ is split (resp.\ $Q_M$ and $O_M$ are split),
but $O$ (resp.\ $Q$ and $O$) will stay division over any simple
(resp.\ exponent $\leq \ell-1$) extension $K$ of $F$
contained in $M$.
\end{cor}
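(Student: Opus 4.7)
The plan is to translate the corollary into a statement about quadratic Pfister forms via the standard dictionary recalled at the start of this section: in characteristic $2$, a $2$-fold (resp.\ $3$-fold) quadratic Pfister form is, up to isometry, the norm form of a unique quaternion (resp.\ octonion) algebra; moreover that algebra is division iff its norm form is anisotropic, and split iff the norm form is isotropic (equivalently, hyperbolic, by Pfister roundness). Once this dictionary is in place, the corollary becomes a direct reinterpretation of Examples~\ref{ex1b} and~\ref{ex3a}.

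For the first case ($1\leq \ell\leq \max\{1,m-2\}$), I would apply Example~\ref{ex1b} with $n=0$ to obtain a field $F$, an anisotropic $3$-fold Pfister form $\pi$ over $F$, and a p.i.\ extension $M/F$ of degree $2^m$ and exponent $\ell$ such that $\pi_M$ is isotropic while $\pi_K$ stays anisotropic over every simple subextension $K$ of $F$ in $M$. Taking $O$ to be the octonion algebra with $\eta_O\cong\pi$ then immediately yields that $O$ is division over $F$, $O_M$ is split, and $O_K$ stays division for every simple $K$ between $F$ and $M$.

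For the second case ($2\leq \ell\leq m$), I would apply Example~\ref{ex3a} with $n=1$. This furnishes a field $F$, the anisotropic $3$-fold Pfister form $\pi_O\cong\qpf{a_1,y,y^{2^{\ell-1}}x}$, and a p.i.\ extension $M/F$ of degree $2^m$ and exponent $\ell$ such that $(\pi_O)_M$ is isotropic while $(\pi_O)_K$ is anisotropic whenever $\exp_2(K/F)\leq \ell-1$. Setting $\pi_Q\cong\qpf{y,y^{2^{\ell-1}}x}$, one has $\pi_O\cong\qf{1,a_1}_b\otimes\pi_Q$, hence $\pi_Q\prec\pi_O$; so anisotropy of $\pi_O$ over any such $K$ (including $K=F$) transfers to $\pi_Q$, while $\pi_Q$ itself becomes isotropic already over $F(\sqrt[2^\ell]{x})\subseteq M$ by the identical computation used in Example~\ref{ex3}: rewrite $y^{2^{\ell-1}}x=(y\xi)^{2^{\ell-1}}$ with $\xi=\sqrt[2^{\ell-1}]{x}$, apply Lemma~\ref{relation}(ii) over $E'=F(\xi)$ to get $\qpf{y,y^{2^{\ell-1}}x}_{E'}\cong\qpf{y,y\xi}_{E'}$, and then apply Lemma~\ref{relation}(i) over $E'(\sqrt{\xi})=F(\sqrt[2^\ell]{x})$. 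Letting $Q$ and $O$ be the quaternion and octonion algebras with $\eta_Q\cong\pi_Q$ and $\eta_O\cong\pi_O$ gives the claim in this case.

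I do not expect any real obstacle: the corollary is essentially a repackaging of Examples~\ref{ex1b} and~\ref{ex3a} under the norm-form correspondence. The only minor sanity check is that the single field $F$ produced by Example~\ref{ex3a} with $n=1$ supports both $\pi_Q$ and $\pi_O$ simultaneously, which is automatic from the subform relation $\pi_Q\prec\pi_O$ and from the fact that the $2$-independent set used in the construction already accommodates the elements $y,x$ needed to write down $\pi_Q$.
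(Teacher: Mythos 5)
Your proposal is correct and follows the same route the paper intends: the corollary is obtained by combining the norm-form correspondence for quaternion/octonion algebras with Example~\ref{ex1b} (with $n=0$) for the first case and Example~\ref{ex3a} for the second. Your extra care in the second case --- deriving the quaternion statement from the subform relation $\pi_Q\prec\pi_O$ so that both algebras live over the same field $F$ with the same extension $M$, and rechecking that $\pi_Q$ itself becomes isotropic over $F(\sqrt[2^\ell]{x})\subseteq M$ --- is exactly the right sanity check and introduces no gap.
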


\end{document}